\date{}
\newtheorem{theorem}{\bf \indent Theorem}[section]
\newtheorem{lemma}{\bf \indent Lemma}[section]
\theoremstyle{remark}
\newtheorem{remark}{\bf \indent Remark}[section]
\numberwithin{equation}{section}
\newcommand{\R}{{\mathbb R}}
\newcommand{\D}{{\mathcal D}}
\newcommand{\E}{{\mathcal E}}
\newcommand{\const}{\mathrm{const}}
\title{On self-similar solutions of a multi-phase Stefan problem}
\author{Evgeny Yu. Panov \\ Yaroslav-the-Wise Novgorod State University, \\ Veliky Novgorod, Russian Federation}
\begin{document}

\maketitle

\begin{abstract}
We study self-similar solutions of a multi-phase Stefan problem, first in the case of one space variable, and then in the radial multidimensional case. In both these cases we prove that a nonlinear algebraic system for determination of the free boundaries is gradient one and the corresponding potential is an explicitly written coercive function. Therefore, there exists a minimum point of the potential, coordinates of this point determine free boundaries and provide the desired solution. Moreover, in one-dimensional case the potential is proved to be strictly convex and this implies the uniqueness of the solution. In contrary, in the multidimensional case the potential is not convex but the uniqueness of our solution remains true, it follows from the general theory.
Bibliography$:$
$3$ titles.
\end{abstract}

\section{Stefan problem. One-dimensional case}\label{sec1}

In a half-plane $\Pi=\{ \ (t,x) \ | \ t>0, x\in\R \ \}$ we consider the multi-phase Stefan problem for the heat equation
\begin{equation}\label{1}
u_t=a_i^2u_{xx}, \quad u_i<u<u_{i+1},
\end{equation}
where $u_-=u_0<u_1<\cdots<u_m<u_{m+1}=u_+$, $u_i$, $i=1,\dots,m$ being the temperatures of phase transitions, $a_i>0$,
$i=0,\dots,m$, are the diffusivity constants. On the unknown lines $x=x_i(t)$ of phase transitions where $u=u_i$ the following Stefan condition
\begin{equation}\label{St}
d_ix_i'(t)+k_iu_x(t,x_i(t)+)-k_{i-1}u_x(t,x_i(t)-)=0
\end{equation}
is postulated, where $k_i>0$ is the thermal conductivity of the $i$-th phase, while $d_i\ge 0$ is the Stefan number (the latent specific heat) for the $i$-th phase transition. In (\ref{St}) the unilateral limits $u_x(t,x_i(t)+)$, $u_x(t,x_i(t)-)$ on the line $x=x_i(t)$ are taken from the domain corresponding to the warmer/colder phase, respectively. By the physical reasons,
the Stefan numbers $d_i$ should be positive. We will study even more general case $d_i\ge 0$.
In this case the problem (\ref{1}), (\ref{St}) is well-posed for $u_-< u< u_+$ and reduces to a degenerate nonlinear diffusion  equation,  see \cite[Chapter 5]{LSU}.
We will study the Cauchy problem with the Riemann initial data
\begin{equation}\label{2}
u(0,x)=\left\{\begin{array}{lr} u_-, & x<0, \\ u_+, & x>0. \end{array}\right.
\end{equation}
By the invariance of our problem under the transformation group
$(t,x)\to (\lambda^2 t, \lambda x)$, $\lambda\in\R$, $\lambda\not=0$, it is natural to seek a self-similar solution of problem (\ref{1}), (\ref{St}), (\ref{2}), which has the form $u(t,x)=v(\xi)$, $\xi=x/\sqrt{t}$. For the heat equation
$u_t=a^2 u_{xx}$ a self-similar solution must satisfy the linear ODE $a^2v''=-\xi v'/2$, the general solution of which is
\[
v=C_1F(\xi/a)+C_2, \ C_1,C_2=\const, \mbox{ where } F(\xi)=\frac{1}{2\sqrt{\pi}}\int_{-\infty}^\xi e^{-s^2/4}ds.
\]
This allows to write our solution in the form
\begin{align}\label{3}
v(\xi)=u_i+\frac{u_{i+1}-u_i}{F(\xi_{i+1}/a_i)-F(\xi_i/a_i)}(F(\xi/a_i)-F(\xi_i/a_i)), \\ \nonumber
\xi_i<\xi<\xi_{i+1}, \ i=0,\ldots,m,
\end{align}
where $-\infty=\xi_0<\xi_1<\cdots<\xi_m<\xi_{m+1}=+\infty$ and we agree that $F(-\infty)=0$, $F(+\infty)=1$.
The parabolas $\xi=\xi_i$, $i=1,\ldots,m$, where $u=u_i$, are free boundaries. They must be determined by conditions (\ref{St}).
In the variable $\xi$ these conditions have the form (cf. \cite[Chapter XI]{CJ})
\begin{equation}\label{4}
d_i\xi_i/2+\frac{k_i(u_{i+1}-u_i)F'(\xi_i/a_i)}{a_i(F(\xi_{i+1}/a_i)-F(\xi_i/a_i))}-
\frac{k_{i-1}(u_i-u_{i-1})F'(\xi_i/a_{i-1})}{a_{i-1}(F(\xi_i/a_{i-1})-F(\xi_{i-1}/a_{i-1}))}=0,
\end{equation}
$i=1,\ldots,m$. To investigate this nonlinear system, we notice that it is a gradient one and coincides with the equality
$\nabla E(\bar\xi)=0$, where the function
\begin{align}\label{5}
E(\bar\xi)=-\sum_{i=0}^m  k_i(u_{i+1}-u_i)\ln (F(\xi_{i+1}/a_i)-F(\xi_i/a_i))+\sum_{i=1}^m d_i\xi_i^2/4, \\ \nonumber \bar\xi=(\xi_1,\ldots,\xi_m)\in\Omega,
\end{align}
the open convex domain $\Omega\subset\R^m$ is given by the inequalities $\xi_1<\cdots<\xi_m$.
Observe that $E(\bar\xi)\in C^\infty(\Omega)$. Since the function $F(x)$ takes values in the interval $(0,1)$ all the terms
in expression (\ref{5}) are nonnegative while some of them are strictly positive. Therefore, $E(\bar\xi)>0$.

\subsection{Coercivity of $E$}

Let us introduce the sub-level sets \[\Omega_c=\{ \ \bar\xi\in\Omega \ | \ E(\bar\xi)\le c \ \}, \quad c>0.\]

\begin{theorem}[coercivity]\label{th1}
The sets $\Omega_c$ are compact for each $c>0$.
In particular, the function $E(\bar\xi)$ reaches its minimal value.
\end{theorem}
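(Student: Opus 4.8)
The plan is to verify that every sub-level set $\Omega_c$ is both bounded and closed in $\R^m$; compactness follows at once, and since $E\in C^\infty(\Omega)$ is in particular continuous it attains its infimum on the nonempty compact set $\Omega_c$ (for $c$ large enough), which is automatically the global minimum of $E$ on $\Omega$ because $E>c$ on $\Omega\setminus\Omega_c$.

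\textbf{Boundedness.} On $\Omega$ all summands in (\ref{5}) are nonnegative, so the inequality $E(\bar\xi)\le c$ forces each individual summand to be at most $c$. Applying this to the two extreme logarithmic terms, those with $i=0$ and $i=m$ (recall $\xi_0=-\infty$, $\xi_{m+1}=+\infty$, $F(-\infty)=0$, $F(+\infty)=1$), one gets
\[
F(\xi_1/a_0)\ge \exp\bigl(-c/(k_0(u_1-u_0))\bigr)>0,\qquad 1-F(\xi_m/a_m)\ge \exp\bigl(-c/(k_m(u_{m+1}-u_m))\bigr)>0.
\]
Since $F$ is a continuous strictly increasing bijection of $\R$ onto $(0,1)$, the first inequality bounds $\xi_1$ from below and the second bounds $\xi_m$ from above; by the chain $\xi_1<\xi_2<\cdots<\xi_m$ defining $\Omega$, all coordinates then lie in one fixed compact interval, so $\Omega_c$ is bounded. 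Note that the Stefan terms $\sum_i d_i\xi_i^2/4$ are not used here, which is precisely why allowing $d_i=0$ causes no difficulty.

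\textbf{Closedness.} Let $\bar\xi^{(n)}\in\Omega_c$ converge to some $\bar\xi^*\in\R^m$ (boundedness makes the limit finite), so $\xi_1^*\le\xi_2^*\le\cdots\le\xi_m^*$. The only way $\bar\xi^*$ could fail to lie in $\Omega$ is that $\xi_j^*=\xi_{j+1}^*$ for some $1\le j\le m-1$. In that case $F(\xi_{j+1}^{(n)}/a_j)-F(\xi_j^{(n)}/a_j)\to 0$ by continuity of $F$, so the $j$-th logarithmic term of (\ref{5}) tends to $+\infty$ while every other term remains nonnegative; thus $E(\bar\xi^{(n)})\to+\infty$, contradicting $E(\bar\xi^{(n)})\le c$. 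Hence $\bar\xi^*\in\Omega$, and by continuity of $E$ on $\Omega$ we conclude $E(\bar\xi^*)=\lim_n E(\bar\xi^{(n)})\le c$, i.e.\ $\bar\xi^*\in\Omega_c$.

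Combining the two steps yields compactness of $\Omega_c$, and the existence of a minimizer follows as explained above. The argument is largely bookkeeping; the one point that needs care is matching each way in which $\bar\xi$ can degenerate --- an extreme coordinate escaping to $\pm\infty$, or two neighbouring coordinates merging --- with the specific term of (\ref{5}) that blows up, and observing that nonnegativity of all the remaining terms makes a single such blow-up sufficient to conclude.
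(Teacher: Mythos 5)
Your proof is correct and follows essentially the same strategy as the paper: nonnegativity of every summand in (\ref{5}) lets you bound each logarithmic term by $c$ individually, the $i=0$ and $i=m$ terms confine $\xi_1$ from below and $\xi_m$ from above, and the intermediate terms prevent adjacent coordinates from merging. The only (cosmetic) difference is that you rule out merging qualitatively via a sequential-compactness contradiction, whereas the paper makes it quantitative, using the Lipschitz bound $F'\le 1$ to extract an explicit gap $\xi_{i+1}-\xi_i\ge\delta_1$ and place $\Omega_c$ inside an explicit compact subset of $\Omega$.
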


\begin{proof}
If $\bar\xi=(\xi_1,\ldots,\xi_m)\in\Omega_c$ then
\begin{equation}\label{co1a}
 -k_i(u_{i+1}-u_i)\ln (F(\xi_{i+1}/a_i)-F(\xi_i/a_i))\le E(\bar\xi)\le c, \quad i=0,\ldots,m.
\end{equation}
It follows from (\ref{co1a}) with $i=0$ that $F(\xi_1/a_0)\ge e^{-c/(k_0(u_1-u_0))}$, which implies
the low bound $\xi_1\ge r_1=a_0F^{-1}(e^{-c/(k_0(u_1-u_0))})$.
Similarly, we derive from (\ref{co1a}) with $i=m$ and from the identity $1-F(\xi_m/a_m)=F(-\xi_m/a_m)$ that
$\xi_m\le r_2=-a_mF^{-1}(e^{-c/(k_m(u_{m+1}-u_m))})$.
Since all other coordinates of $\bar\xi$ are situated between $\xi_1$ and $\xi_m$ we conclude that
\[|\bar\xi|_\infty\doteq\max_{i=1,\ldots,m}|\xi_i|\le r=\max(|r_1|,|r_2|)\] and the set $\Omega_c$ is bounded.

Further, it follows from (\ref{co1a}) that for all $i=1,\ldots,m-1$
\begin{equation}\label{12}
F(\xi_{i+1}/a_i)-F(\xi_i/a_i)\ge \delta\doteq\exp(-c/\alpha)>0,
\end{equation}
where $\displaystyle\alpha=\min_{i=1,\ldots,m-1}k_i(u_{i+1}-u_i)>0$. Since $F'(\xi)=\frac{1}{2\sqrt{\pi}}e^{-\xi^2/4}<1$, the function $F(\xi)$ is Lipschitz with constant $1$, and it follows from (\ref{12}) that
\[
(\xi_{i+1}-\xi_i)/a_i\ge F(\xi_{i+1}/a_i)-F(\xi_i/a_i)\ge \delta, \quad i=1,\ldots,m-1,
\]
and we obtain the estimates $\xi_{i+1}-\xi_i\ge\delta_1=\delta\min a_i$. Thus, the set $\Omega_c$ is contained in a compact
\[
K=\{ \ \bar\xi=(\xi_1,\ldots,\xi_m)\in\R^m \ | \ |\bar\xi|_\infty\le r, \ \xi_{i+1}-\xi_i\ge\delta_1 \ \forall i=1 ,\ldots,m-1 \ \}.
\]
Since $E(\bar\xi)$ is continuous on $K$, the set $\Omega_c$ is a closed subset of $K$ and therefore is compact. For $c>N\doteq\inf E(\bar\xi)$, this set is not empty and the function $E(\bar\xi)$ reaches on it a minimal value, which is evidently equal $N$.
\end{proof}

We have established the existence of minimal value $E(\bar\xi_0)=\min E(\bar\xi)$. At the point $\bar\xi_0$ the required condition $\nabla E(\bar\xi_0)=0$ is satisfied, and $\bar\xi_0$ is a solution of system (\ref{4}). The coordinates
of $\bar\xi_0$ determine the solution (\ref{3}) of our Stefan-Riemann problem. Thus, we establish the following existence result.

\begin{theorem}\label{th2}
There exists a self-similar solution (\ref{3}) of problem (\ref{1}), (\ref{St}), (\ref{2}).
\end{theorem}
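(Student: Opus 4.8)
The plan is to read off the solution from a minimizer of the potential $E$ supplied by Theorem~\ref{th1}. First I would fix any $c>N\doteq\inf_\Omega E$ (note $N\ge 0$ is finite since $E>0$), so that by Theorem~\ref{th1} the sub-level set $\Omega_c$ is a nonempty compact subset of $\Omega$ and $E$ attains its infimum $N$ at some point $\bar\xi_0\in\Omega_c$. The point I would emphasize is that $\bar\xi_0$ is an \emph{interior} point of $\Omega$: the proof of Theorem~\ref{th1} shows $\Omega_c\subset K$, and every point of $K$ satisfies the strict inequalities $\xi_1<\cdots<\xi_m$ because consecutive gaps are bounded below by $\delta_1>0$. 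Hence $\bar\xi_0$ is an interior (global) minimum of $E\in C^\infty(\Omega)$, and therefore $\nabla E(\bar\xi_0)=0$.

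Next I would invoke the identity, already noted in the text, that the system (\ref{4}) is precisely $\nabla E(\bar\xi)=0$. This is a one-line differentiation of (\ref{5}): the coordinate $\xi_j$ enters $E$ only through the quadratic term $d_j\xi_j^2/4$ and through the two logarithmic terms with indices $i=j-1$ and $i=j$; using $\frac{d}{d\xi}F(\xi/a)=F'(\xi/a)/a$ one gets $\partial E/\partial\xi_j=d_j\xi_j/2+\frac{k_j(u_{j+1}-u_j)F'(\xi_j/a_j)}{a_j(F(\xi_{j+1}/a_j)-F(\xi_j/a_j))}-\frac{k_{j-1}(u_j-u_{j-1})F'(\xi_j/a_{j-1})}{a_{j-1}(F(\xi_j/a_{j-1})-F(\xi_{j-1}/a_{j-1}))}$, which is the left side of (\ref{4}) (the conventions $F(-\infty)=0$, $F(+\infty)=1$ make the boundary cases $j=1$ and $j=m$ consistent). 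Consequently the coordinates $\xi_1=(\bar\xi_0)_1<\cdots<\xi_m=(\bar\xi_0)_m$ solve (\ref{4}).

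Finally I would verify that substituting these $\xi_i$ into (\ref{3}) yields a genuine self-similar solution. In each strip $\xi_i<\xi<\xi_{i+1}$ the function $v$ is by construction an affine function of $F(\xi/a_i)$, hence solves $a_i^2v''=-\xi v'/2$, so $u(t,x)=v(x/\sqrt t)$ satisfies $u_t=a_i^2u_{xx}$ there; at each node $\xi=\xi_i$ the two one-sided expressions in (\ref{3}) both equal $u_i$, so $v$ is continuous and attains the transition value $u_i$, and since $F$ is strictly increasing $v$ is strictly increasing and the phases are correctly ordered; the Stefan condition (\ref{St}) on $x=\xi_i\sqrt t$ rewrites in the variable $\xi$ exactly as (\ref{4}) (cf. \cite[Chapter XI]{CJ}), which holds by the previous step; and letting $\xi\to\pm\infty$ gives $v(-\infty)=u_-$, $v(+\infty)=u_+$, i.e. the Riemann data (\ref{2}). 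The only delicate point is the one flagged above — that the minimizer of $E$ does not escape to $\partial\Omega$, where two free boundaries would collide and the stationarity equation (\ref{4}) would be unavailable — and this is precisely what the compactness of $\Omega_c$ in Theorem~\ref{th1} rules out; everything else is a routine check that (\ref{3}) was designed to satisfy the equations.
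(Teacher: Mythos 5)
Your proposal is correct and follows essentially the same route as the paper: Theorem~\ref{th1} yields a minimizer $\bar\xi_0$ of the coercive function $E$, the interior minimum gives $\nabla E(\bar\xi_0)=0$, which is exactly system (\ref{4}), and the coordinates of $\bar\xi_0$ then determine the solution (\ref{3}). Your added checks (that the minimizer stays away from $\partial\Omega$, the explicit differentiation of (\ref{5}), and the verification that (\ref{3}) satisfies the PDE, the Stefan conditions, and the Riemann data) are details the paper leaves implicit, and they are all accurate.
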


\subsection{Convexity of the Function $E$ and Uniqueness of the Solution}

In this section we prove that the function $E(\bar\xi)$ is strictly convex. Since a strictly convex function can have at most one critical point (and it is necessarily a global minimum) the system (\ref{4}) has at most one solution, that is, a self-similar solution (\ref{3}) of problem (\ref{1}), (\ref{St}), (\ref{2}) is unique. We will need the following simple lemma.

\begin{lemma}\label{lem1}
The function $P(x,y)=-\ln (F(x)-F(y))$ is strictly convex in the half-plane $x>y$.
\end{lemma}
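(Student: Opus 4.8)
The plan is to prove that the Hessian of $P$ is positive definite at every point of the open half-plane $\{x>y\}$, which gives strict convexity. First note that $P$ is smooth there, since $g:=g(x,y)=F(x)-F(y)=\int_y^x\varphi(s)\,ds>0$, where $\varphi:=F'$, $\varphi(s)=\frac{1}{2\sqrt\pi}e^{-s^2/4}$. The only structural input I will use is that the Gaussian weight $\varphi$ is strictly log-concave; concretely $\varphi>0$, $\varphi$ is even, and $\varphi'(s)=-\tfrac{s}{2}\varphi(s)$.

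A direct differentiation gives
\[
P_{xx}=-\frac{\varphi'(x)}{g}+\frac{\varphi(x)^2}{g^2},\qquad
P_{yy}=\frac{\varphi'(y)}{g}+\frac{\varphi(y)^2}{g^2},\qquad
P_{xy}=-\frac{\varphi(x)\varphi(y)}{g^2}.
\]
Substituting $\varphi'(s)=-\tfrac{s}{2}\varphi(s)$, one checks that $P_{xx}>0$ is equivalent to $\varphi(x)+\tfrac{x}{2}g>0$, that $P_{yy}>0$ is equivalent to $\varphi(y)-\tfrac{y}{2}g>0$, and that
\[
P_{xx}P_{yy}-P_{xy}^2=\frac{\varphi(x)\varphi(y)}{2g^3}\,D(x,y),\qquad D(x,y):=x\varphi(y)-y\varphi(x)-\frac{xy}{2}\,g(x,y).
\]
Since positive definiteness of the Hessian amounts to $P_{xx}>0$ together with $P_{xx}P_{yy}-P_{xy}^2>0$, everything reduces to the sign conditions $\varphi(x)+\tfrac{x}{2}g>0$ and $D(x,y)>0$ (the condition $P_{yy}>0$ then comes for free, but the quantity $\varphi(y)-\tfrac{y}{2}g$ is also needed below).

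The elementary engine is a Mills-ratio estimate: for $s>0$,
\[
1-F(s)<\frac{2\varphi(s)}{s},\qquad\text{equivalently}\qquad F(t)<\frac{2\varphi(t)}{-t}\quad (t<0).
\]
I obtain it by writing $\varphi(r)=-\tfrac{2}{r}\varphi'(r)$ for $r>0$ and integrating: since $\tfrac1r<\tfrac1s$ and $-\varphi'(r)>0$ on $(s,\infty)$,
\[
1-F(s)=\int_s^\infty\varphi(r)\,dr=\int_s^\infty\Bigl(-\frac{2}{r}\Bigr)\varphi'(r)\,dr<\frac{2}{s}\int_s^\infty\bigl(-\varphi'(r)\bigr)\,dr=\frac{2}{s}\varphi(s).
\]
Granting this, the two diagonal inequalities are immediate. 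For the first: if $x\ge0$ then $\varphi(x)+\tfrac x2 g\ge\varphi(x)>0$; if $x<0$ then $g<F(x)$ (because $F(y)>0$), so $\varphi(x)+\tfrac x2 g>\varphi(x)+\tfrac x2 F(x)>0$ by the Mills bound at $t=x$. Symmetrically, $\varphi(y)-\tfrac y2 g>0$: trivial for $y\le0$, and for $y>0$ one uses $g<1-F(y)=F(-y)$ together with the Mills bound at $t=-y$ and $\varphi(-y)=\varphi(y)$.

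It remains to show $D>0$. The key observation is that differentiating $D$ in $x$ and invoking the relation $\varphi'(s)=-\tfrac{s}{2}\varphi(s)$ produces a cancellation:
\[
\partial_x D(x,y)=\varphi(y)-\frac{y}{2}\,g(x,y),
\]
which is exactly the quantity just shown to be strictly positive for all $x>y$. Since $x\varphi(y)-y\varphi(x)\to0$ and $g(x,y)\to0$ as $x\downarrow y$, we have $D(x,y)\to0$ in that limit, and integrating $\partial_x D>0$ in $x$ from $y$ to $x$ yields $D(x,y)>0$ on $\{x>y\}$. Hence the Hessian of $P$ is positive definite throughout the half-plane and $P$ is strictly convex. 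The only real obstacle is organizing the reductions correctly — in particular recognizing that the $2\times2$ determinant collapses to $D$ and that $\partial_x D$ simplifies via the Gaussian identity for $\varphi'$ — after which the sole analytic ingredient is the one-line Mills inequality. (One could alternatively derive plain convexity of $P$ from log-concavity of $\varphi$ through Pr\'ekopa's marginalization theorem, but that route is less elementary and does not by itself yield strictness.)
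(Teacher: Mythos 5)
Your proof is correct, but it follows a genuinely different route from the paper's. You verify positive definiteness of the Hessian via Sylvester's criterion: you reduce $P_{xx}>0$ and $P_{yy}>0$ to the sign conditions $\varphi(x)+\tfrac{x}{2}g>0$ and $\varphi(y)-\tfrac{y}{2}g>0$, establish these by the Mills-ratio inequality $1-F(s)<\tfrac{2}{s}\varphi(s)$ for $s>0$, and then handle the determinant by showing it collapses to $\tfrac{\varphi(x)\varphi(y)}{2g^3}D(x,y)$ with $\partial_x D=\varphi(y)-\tfrac{y}{2}g>0$ and $D\to 0$ as $x\downarrow y$ --- a nice self-referential cancellation. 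I checked the algebra for $P_{xx},P_{yy},P_{xy}$, the determinant identity, the Mills bound, and the computation of $\partial_x D$; all are correct, and the case analysis on the signs of $x$ and $y$ is complete. The paper instead avoids the determinant entirely: it applies the Cauchy mean value theorem to get $\frac{F'(x)-F'(y)}{F(x)-F(y)}=-z/2$ for some $z\in(y,x)$ and thereby decomposes the scaled Hessian as $Q=R_1+F'(x)F'(y)R_2$, where $R_1$ is diagonal with entries proportional to $(x-z)$ and $(z-y)$ (hence positive) and $R_2=\left(\begin{smallmatrix}1&-1\\-1&1\end{smallmatrix}\right)\ge 0$. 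The paper's decomposition is shorter and exhibits positive definiteness structurally in one stroke; your argument is more computational but each ingredient (the Mills inequality, the monotone integration of $D$) is elementary and independently verifiable, and it additionally isolates the explicit positive quantities $\varphi(x)+\tfrac{x}{2}g$ and $\varphi(y)-\tfrac{y}{2}g$ that the paper also needs (in Remark~\ref{rem1}) for the boundary cases $y=-\infty$. Both proofs ultimately rest on the same structural fact, the strict log-concavity of the Gaussian weight expressed by $\varphi'(s)=-\tfrac{s}{2}\varphi(s)$.
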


\begin{proof}
The function $P(x,y)$ is infinitely differentiable in the domain $x>y$. To prove the lemma, we need to establish that the Hessian $D^2 P$ is positive definite at every point. By the direct computation we find
\begin{align*}
\frac{\partial^2}{\partial x^2} P(x,y)=\frac{(F'(x))^2-F''(x)(F(x)-F(y))}{(F(x)-F(y))^2}, \\
\frac{\partial^2}{\partial y^2} P(x,y)=\frac{(F'(y))^2-F''(y)(F(y)-F(x))}{(F(x)-F(y))^2}, \
\frac{\partial^2}{\partial x\partial y} P(x,y)=-\frac{F'(x)F'(y)}{(F(x)-F(y))^2}.
\end{align*}
We have to prove positive definiteness of the matrix $Q=(F(x)-F(y))^2 D^2 P(x,y)$ with the components
\begin{align*}
Q_{11}=(F'(x))^2-F''(x)(F(x)-F(y)), \\ Q_{22}=(F'(y))^2-F''(y)(F(y)-F(x)), \ Q_{12}=Q_{21}=-F'(x)F'(y).
\end{align*}
Since $F'(x)=e^{-x^2/4}$, then $F''(x)=-\frac{x}{2}F'(x)$ and the diagonal elements of this matrix can be written in the form
\begin{align*}
Q_{11}=F'(x)(\frac{x}{2}(F(x)-F(y))+F'(x))= \\ F'(x)(\frac{x}{2}(F(x)-F(y))+(F'(x)-F'(y)))+F'(x)F'(y), \\
Q_{22}=F'(y)(\frac{y}{2}(F(y)-F(x))+(F'(y)-F'(x)))+F'(x)F'(y).
\end{align*}
By Cauchy mean value theorem there exists such a value $z\in (y,x)$ that
\[
\frac{F'(x)-F'(y)}{F(x)-F(y)}=\frac{F''(z)}{F'(z)}=-z/2.
\]
Therefore,
\begin{align*}
Q_{11}=F'(x)(F(x)-F(y))(x-z)/2+F'(x)F'(y), \\ Q_{22}=F'(y)(F(x)-F(y))(z-y)/2+F'(x)F'(y),
\end{align*}
and it follows that $Q=R_1+F'(x)F'(y)R_2$, where $R_1$ is a diagonal matrix with the positive diagonal elements
$F'(x)(F(x)-F(y))(x-z)/2$, $F'(y)(F(x)-F(y))(z-y)/2$ while $R_2=\left(\begin{smallmatrix} 1 & -1 \\ -1 & 1\end{smallmatrix}\right)$. Since $R_1>0$, $R_2\ge 0$, then the matrix $Q>0$, as was to be proved.
\end{proof}

\begin{remark}\label{rem1}
In addition to Lemma~\ref{lem1} we observe that the functions $P(x,-\infty)$, $P(+\infty,x)=P(-x,-\infty)$
of single variable $x$ are strictly convex. In fact, it is sufficient to prove the strict convexity of the function
$P(x,-\infty)=-\ln F(x)$. By Lemma~\ref{lem1} in the limit as $y\to-\infty$ we obtain that this function is convex, moreover,
\[
(F(x))^2\frac{d^2}{dx^2}P(x,-\infty)=F'(x)(\frac{x}{2}F(x)+F'(x))=\lim_{y\to-\infty}Q_{11}\ge 0.
\]
Since $F'(x)>0$, we find, in particular, that $\frac{x}{2}F(x)+F'(x)\ge 0$.
If $\frac{d^2}{dx^2}P(x,-\infty)=0$ at some point $x=x_0$ then $0=\frac{x_0}{2}F(x_0)+F'(x_0)$ is the minimum of the nonnegative function $\frac{x}{2}F(x)+F'(x)$. Therefore, its derivative $(\frac{x}{2}F+F')'(x_0)=0$. Since $F''(x)=-\frac{x}{2}F'(x)$, this derivative
\[
(\frac{x}{2}F+F')'(x_0)=F(x_0)/2+\frac{x_0}{2}F'(x_0)+F''(x_0)=F(x_0)/2>0.
\]
But this contradicts our assumption. We conclude that $\frac{d^2}{dx^2}P(x,-\infty)>0$ and the function $P(x,-\infty)$ is strictly convex.
\end{remark}
Now we are ready to prove the expected convexity of $E(\bar\xi)$.

\begin{theorem}
The function $E(\bar\xi)$ is strictly convex.
\end{theorem}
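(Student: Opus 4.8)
The plan is to exhibit $E$ as a sum of convex functions, each of them a composition of one of the strictly convex functions supplied by Lemma~\ref{lem1} and Remark~\ref{rem1} with a linear map, and then to verify that along every line segment in $\Omega$ at least one of these summands is genuinely strictly convex.

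First I would decompose the logarithmic part of $E$ into endpoint terms and interface terms. Since $\xi_0=-\infty$, $\xi_{m+1}=+\infty$ and $a_0,a_m>0$, the $i=0$ term of $E$ equals $k_0(u_1-u_0)\,P(\xi_1/a_0,-\infty)$ and the $i=m$ term equals $k_m(u_{m+1}-u_m)\,P(+\infty,\xi_m/a_m)$; by Remark~\ref{rem1} each of these is a strictly convex function of the single variable $\xi_1$, resp.\ $\xi_m$, hence a convex function on all of $\R^m$. For $1\le i\le m-1$ the $i$-th term equals $k_i(u_{i+1}-u_i)\,P(\xi_{i+1}/a_i,\xi_i/a_i)$, i.e.\ the strictly convex function $P$ of Lemma~\ref{lem1} precomposed with the linear map $\bar\xi\mapsto(\xi_{i+1}/a_i,\xi_i/a_i)$; such a composition is convex on $\R^m$ and, along any segment on which the pair $(\xi_i,\xi_{i+1})$ is non-constant, it is strictly convex. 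Finally the quadratic terms $d_i\xi_i^2/4$ are convex because $d_i\ge0$. Summing, $E$ is convex on the convex domain $\Omega$.

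To upgrade convexity to strict convexity, I would take two distinct points $\bar\xi,\bar\eta\in\Omega$ and pick an index $j$ with $\xi_j\ne\eta_j$. If $j\le m-1$, then along the segment $[\bar\xi,\bar\eta]$ the pair $(\xi_j,\xi_{j+1})$ is non-constant, so the $i=j$ term is strictly convex on this segment while every other summand of $E$ is convex on it; hence $E$ is strictly convex on $[\bar\xi,\bar\eta]$. Otherwise $\xi_j=\eta_j$ for all $j\le m-1$, so necessarily $\xi_m\ne\eta_m$, and the same conclusion follows from the strict convexity in $\xi_m$ of the $i=m$ term (in particular this covers the degenerate case $m=1$, where there are no interface terms). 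Since every segment in $\Omega$ lies in one of these two cases, $E$ is strictly convex. I do not expect a serious obstacle here once Lemma~\ref{lem1} and Remark~\ref{rem1} are in hand; the only point deserving attention is precisely this bookkeeping — ensuring that for every admissible direction of variation of $\bar\xi$ some summand is really strictly convex rather than merely affine, which is why the endpoint terms $i=0$ and $i=m$ must be retained alongside the interface terms.
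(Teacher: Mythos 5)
Your proof is correct and follows essentially the same route as the paper: the same decomposition of $E$ into the logarithmic summands plus quadratics, with Lemma~\ref{lem1} supplying strict convexity of the interface terms in the pairs $(\xi_i,\xi_{i+1})$ and Remark~\ref{rem1} handling the endpoint terms. The only cosmetic differences are that you verify strict convexity along segments rather than via positive definiteness of the Hessian, and you fall back on the $i=m$ endpoint term where the paper uses the $i=0$ term (relevant only when no interface index is available, e.g.\ $m=1$).
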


\begin{proof}
We introduce the functions
\[
E_i(\bar\xi)=-k_i(u_{i+1}-u_i)\ln (F(\xi_{i+1}/a_i)-F(\xi_i/a_i)), \quad i=0,\ldots,m.\]
By Lemma~\ref{lem1} and Remark~\ref{rem1} all these functions are convex.
Since
\[
E(\bar\xi)=\sum_{i=0}^{m-1} E_i(\bar\xi)+ E_m(\bar\xi)+\sum_{i=1}^m d_i\xi_i^2/4
\]
and all the functions in this sum are convex, it is sufficient to prove the strong convexity of the sum
\[
\tilde E(\bar\xi)=\sum_{i=0}^{m-1} E_i(\bar\xi).
\]
By Lemma~\ref{lem1} and Remark~\ref{rem1} all the terms in this sum are convex function. Therefore, the function $\tilde E$ is convex as well. To prove the strict convexity, we assume that for some vector $\zeta=(\zeta_1,\ldots,\zeta_m)\in\R^m$.
\begin{equation}\label{deg}
D^2 \tilde E(\bar\xi)\zeta\cdot\zeta=\sum_{i,j=1}^m \frac{\partial^2 \tilde E(\bar\xi)}{\partial\xi_i\partial\xi_j}\zeta_i\zeta_j=0
\end{equation}
Since
\[
0=D^2 \tilde E(\bar\xi)\zeta\cdot\zeta=\sum_{i=0}^{m-1} D^2 E_i(\bar\xi)\zeta\cdot\zeta
\]
while all the terms are nonnegative, we conclude that
\begin{equation}\label{deg1}
D^2 E_i(\bar\xi)\zeta\cdot\zeta=0, \quad i=0,\ldots,m-1.
\end{equation}
By Lemma~\ref{lem1} for $i=1,\ldots,m-1$ the function $E_i(\bar\xi)$ is strictly convex as a function of two variables $\xi_i,\xi_{i+1}$ and it follows from (\ref{deg1}) that $\zeta_i=\zeta_{i+1}=0$, $i=1,\ldots,m-1$. Observe that in the case $m=1$ there are no such $i$. In this case we apply (\ref{deg1}) for $i=0$. Taking into account Remark~\ref{rem1}, we find that $E_0(\bar\xi)$ is a strictly convex function of the single variable $\xi_1$, and it follows from (\ref{deg1}) that $\zeta_1=0$. In any case we obtain that the vector $\zeta=0$. Thus, relation (\ref{deg}) can hold only for zero $\zeta$, that is, the matrix $D^2\tilde E(\bar\xi)$ is (strictly) positive definite, and the function $\tilde E(\bar\xi)$ is strictly convex. This completes the proof.
\end{proof}
Theorems~\ref{th1},\ref{th2} imply our main result.

\begin{theorem}\label{th3}
There exists a unique self-similar solution (\ref{3}) of problem (\ref{1}), (\ref{St}), (\ref{2}), and it correspond to the minimum of strictly convex and coercive function (\ref{5}).
\end{theorem}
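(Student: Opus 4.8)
The plan is to assemble the three facts already at our disposal: that $E\in C^\infty(\Omega)$ on the open convex domain $\Omega=\{\bar\xi\in\R^m\mid\xi_1<\cdots<\xi_m\}$, the coercivity of $E$ from Theorem~\ref{th1}, and its strict convexity from the theorem just proved. The first step is to make explicit the correspondence between self-similar solutions and the algebraic system. A function of the form (\ref{3}) automatically satisfies the heat equation (\ref{1}) in each phase, the initial condition (\ref{2}), and the continuity requirement $u=u_i$ on $\xi=\xi_i$; it solves the full problem (\ref{1}), (\ref{St}), (\ref{2}) precisely when the Stefan conditions (\ref{St}), rewritten in the self-similar variable $\xi=x/\sqrt t$, hold, and these are exactly equations (\ref{4}), that is, $\nabla E(\bar\xi)=0$. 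Hence self-similar solutions (\ref{3}) are in one-to-one correspondence with the critical points $\bar\xi\in\Omega$ of $E$.

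Next I would establish existence. By Theorem~\ref{th1} every sublevel set $\Omega_c$ is compact and contained in $\Omega$; therefore, for $c>N=\inf_\Omega E$, the continuous function $E$ attains its minimum on $\Omega_c$ at some point $\bar\xi_0$, and this minimum equals $N$. Since $\bar\xi_0$ lies in the open set $\Omega$, it is an interior minimizer of a smooth function, so $\nabla E(\bar\xi_0)=0$; by the correspondence above, its coordinates determine a self-similar solution (\ref{3}). This reproduces Theorem~\ref{th2}.

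Then comes uniqueness. On the convex set $\Omega$ the function $E$ is strictly convex, and a differentiable strictly convex function on a convex set has at most one critical point: if $\nabla E$ vanished at two distinct points $\bar\xi_1,\bar\xi_2$, then restricting $E$ to the segment $[\bar\xi_1,\bar\xi_2]\subset\Omega$ would produce a one-variable strictly convex function whose derivative vanishes at both endpoints, which is impossible. Consequently $\nabla E(\bar\xi)=0$ has exactly one solution $\bar\xi_0\in\Omega$, and, again by the correspondence, the self-similar solution (\ref{3}) is unique. Putting the pieces together, this unique solution is precisely the one furnished by the (unique) minimum point of the strictly convex and coercive function (\ref{5}), which is the assertion of the theorem.

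I do not anticipate a serious obstacle here, since the argument is essentially a synthesis of the preceding results; the only point deserving care is that the minimizer delivered by coercivity must be located in the open domain $\Omega$ itself, not merely in its closure, so that the first-order condition $\nabla E=0$ is legitimately available. This is guaranteed by the fact, established in Theorem~\ref{th1}, that the sublevel sets $\Omega_c$ are compact subsets of $\Omega$.
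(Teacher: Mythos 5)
Your proposal is correct and follows essentially the same route as the paper: Theorem~\ref{th1} (coercivity) gives an interior minimizer, hence a critical point solving system (\ref{4}) and thus a solution (\ref{3}) as in Theorem~\ref{th2}, while strict convexity of $E$ forces at most one critical point, yielding uniqueness. Your added remark that the minimizer lies in the open set $\Omega$ (because the sublevel sets are compact subsets of $\Omega$) is exactly the point the paper relies on as well.
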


\begin{remark}\label{rem2}
In recent paper \cite{Pan1} the problem (\ref{1}), (\ref{St}), (\ref{2}) was studied in the case of arbitrary (possibly negative) latent specific heats $d_i$. It was found a necessary and sufficient condition for the coercivity of $E(\bar\xi)$, as well as a stronger sufficient condition of its strict convexity.
\end{remark}

\section{Multidimensional Stefan problem}

Now we consider the multidimensional case, $x\in\R^n$, $n>1$. In the half-space $\Pi=\{ \ (t,x) \ | \ t>0, x\in\R^n \ \}$
we study the Stefan problem
\begin{equation}\label{m1}
u_t=a_i^2\Delta u, \quad u_i<u<u_{i+1}, \ i=0,\ldots,m,
\end{equation}
where $u_0<u_1<\cdots<u_m<u_{m+1}=+\infty$.
On a phase transition hyper-surface $S$, where $u=u_i$, $i=1,\ldots,m$, the Stefan condition reads
\begin{equation}\label{mSt}
-d_i\nu_t+(k_i\nabla_x u_+-k_{i-1}\nabla_x u_-)\cdot\nu_x=0,
\end{equation}
where $\nu=(\nu_t,\nu_x)\in\R\times\R^n$, $\nu\not=0$ is a normal vector to $S$ directed from the $(i-1)$-th phase to the $i$-th phase (so that the temperature increases in the direction of $\nu$), $\nabla_x u_-$, $\nabla_x u_+$ are unilateral limits of the gradient $\nabla_x u$ from the colder (respectively, the warmer) side. As in section~\ref{sec1}, the constants $k_i>0$, $i=0,\ldots,m$, are the thermal conductivities, $d_i\ge 0$, $i=1,\ldots,m$, are the latent specific heats.

We are interesting in self-similar radial solutions \[u=v(r/\sqrt{t}), \ r=|x|=(x_1^2+\cdots +x_n^2)^{1/2},\] defined for $r>0$. Putting the function $v(r/\sqrt{t})$ into the heat equation $u_t=a^2\Delta u$, we find that this equation reduces to the linear ODE
\[
a^2v''+\left(a^2\frac{n-1}{\xi}+\frac{\xi}{2}\right)v'=0. \quad\xi=r/\sqrt{t}.
\]
The general solution of this equation is the following:
\begin{equation}\label{form}
v=C_1G(\xi/a)+C_2, \quad C_1,C_2=\const, \quad G(y)=\int_y^{+\infty} s^{1-n}e^{-s^2/4}ds, \ y>0.
\end{equation}
Remark that
\[
G(+\infty)\doteq\lim_{y\to +\infty}G(y)=0, \ \lim_{y\to 0+}G(y)=+\infty.
\]
Moreover, as $y\to 0+$
\begin{equation}\label{lim0}
G(y)\sim \E_n(y)=\left\{\begin{array}{lcr} \frac{1}{(n-2) y^{n-2}} & , & n\ge 3, \\ -\ln y & , & n=2, \end{array}\right.
\end{equation}
so that $-\frac{1}{\omega_n}\E_n(|x|)$ is the fundamental solution of the Laplace operator in $\R^n$, $\omega_n$ being the surface area of a unit sphere in $\R^n$.
We are going to construct a solution to problem (\ref{m1}), (\ref{mSt}) with the constant initial data
\begin{equation}\label{m2}
u(0,x)=u_0,
\end{equation}
and with the following growth condition at $x=0$:
\begin{equation}\label{gr}
u(t,x)\sim A\E_n(|x|/(a_m\sqrt{t})) \ \mbox{ as } x\to 0,
\end{equation}
where $A$ is a positive constant (we will demonstrate below that this condition corresponds to a positive heat source at the point $x=0$). It follows from (\ref{form}), (\ref{lim0}), (\ref{gr}) that in a vicinity of zero our solution has the form
\begin{equation}\label{solm}
v(\xi)=u_m+A(G(\xi/a_m)-G(\xi_m/a_m)), \quad 0<\xi<\xi_m,
\end{equation}
where $\xi_m$ corresponds to the unknown phase transition surface $|x|=\xi_m\sqrt{t}$ with temperature $u_m$.
Notice that the function $u=v(|x|/\sqrt{t})$ satisfies the inhomogeneous heat equation
\begin{equation}\label{sr1}
u_t-a_m^2\Delta u=A\omega_na_m^nt^{\frac{n}{2}-1}\delta(x)
\end{equation}
in the sense of distribution on $\Pi$ (in $\D'(\Pi)$), with the positive point heat source $A\omega_na_m^nt^{\frac{n}{2}-1}\delta(x)$ (in the case $n=2$ it does not depend on time). Here, as usual, $\delta(x)$ denotes the Dirac $\delta$-function. In fact, applying the distribution $u_t-a_m^2\Delta u$ to a test function $f=f(t,x)\in C_0^\infty(\Pi)$ and integrating by parts in the domain $|x|>\delta>0$, we obtain that
\begin{align}\label{sr2}
-\int_{|x|>\delta} u[f_t+a_m^2\Delta f]dtdx=\int_{|x|>\delta}[u_t-a_m^2\Delta u]fdtdx+ \nonumber\\ a_m^2\int_{|x|=\delta} (u\nabla f-f\nabla u)\cdot\nu dt d\sigma(x)=a_m^2\int_{|x|=\delta}(u\nabla f-f\nabla u)\cdot\nu dt d\sigma(x),
\end{align}
where $\nu=x/|x|$ is a unit normal vector on the sphere $|x|=\delta$, and $d\sigma(x)$ is the surface Lebesgue measure on this sphere. Since for $|x|=\delta$
\[
u=AG(\delta/(a_m\sqrt{t}))+\const, \quad \nabla u\cdot\nu=-A\delta^{1-n}(a_m\sqrt{t})^{n-2}e^{-\frac{\delta^2}{4a_m^2t}},
\]
then, taking into account (\ref{lim0}), we find that in the limit as $\delta\to 0$
\[
\int_{|x|=\delta}(u\nabla f-f\nabla u)\cdot\nu d\sigma(x)\to A\omega_n(a_m\sqrt{t})^{n-2}f(t,0).
\]
It now follows from (\ref{sr2}) in the limit as $\delta\to 0$ that
\[
-\int_\Pi u[f_t+a_m^2\Delta f]dtdx=A\omega_na_m^n\int_0^{+\infty} t^{\frac{n}{2}-1} f(t,0)dt=\langle A\omega_na_m^nt^{\frac{n}{2}-1}\delta(x),f\rangle
\]
for all test functions $f\in C_0^\infty(\Pi)$. Hence, relation (\ref{sr1}) holds in $\D'(\Pi)$.

\medskip
Let $\xi_i$ be parameters of remaining phase transitions $|x|=\xi_i\sqrt{t}$ corresponding to the temperatures $u_i$, $i=1,\ldots,m-1$. Then $\xi_m<\xi_{m-1}<\cdots<\xi_1<\xi_0\doteq+\infty$ and in accordance with (\ref{form}) for
$i=0,\ldots,m-1$
\begin{equation}\label{soli}
v(\xi)=u_i+\frac{(u_{i+1}-u_i)(G(\xi/a_i)-G(\xi_i/a_i))}{G(\xi_{i+1}/a_i)-G(\xi_i/a_i)}, \quad \xi_{i+1}<\xi<\xi_i,
\end{equation}
where the value $G(\xi_0)=G(+\infty)=0$. The function $v(\xi)$, defined by relations (\ref{solm}), (\ref{soli}), is a solution of our Stefan problem.
Notice that this function decreases from $+\infty$ at $\xi=0$ to $u_0$ at $\xi=+\infty$ and, in particular, the initial condition (\ref{m2}) is satisfied. The unknown parameters $\xi_i$ are determined by Stefan condition (\ref{mSt}) on the surfaces $|x|=\xi_i\sqrt{t}$, which can be written as
\[
d_i\xi_i/2+k_iv'(\xi_i-0)-k_{i-1}v'(\xi_i+0)=0, \ i=1,\ldots,m.
\]
Computing the derivatives $v'(\xi_i\pm 0)$ from expressions (\ref{soli}), (\ref{solm}), we arrive at the system
\begin{align}\label{sys1}
d_i\xi_i/2+k_i\frac{(u_{i+1}-u_i)G'(\xi_i/a_i)}{a_i(G(\xi_{i+1}/a_i)-G(\xi_i/a_i))}- \nonumber\\
k_{i-1}\frac{(u_i-u_{i-1})G'(\xi_i/a_{i-1})}{a_{i-1}(G(\xi_i/a_{i-1})-G(\xi_{i-1}/a_{i-1}))}=0, \quad i=1,\ldots,m-1, \\
\label{sysm}
d_m\xi_m/2+k_m\frac{A}{a_m}G'(\xi_m/a_m)-\nonumber\\ k_{m-1}\frac{(u_m-u_{m-1})G'(\xi_m/a_{m-1})}{a_{m-1}(G(\xi_m/a_{m-1})-G(\xi_{m-1}/a_{m-1}))}, \quad i=m.
\end{align}
Like in one-dimensional case this system turns out to be gradient one, it coincides with the equality $\nabla E=0$, where the function
\begin{align}\label{E}
E(\bar\xi)=-\sum_{i=0}^{m-1}k_i(u_{i+1}-u_i)\ln (G(\xi_{i+1}/a_i)-G(\xi_i/a_i))\nonumber\\
+k_mAG(\xi_m/a_m)+\frac{1}{4}\sum_{i=1}^m d_i\xi_i^2, \quad \bar\xi=(\xi_1,\ldots,\xi_m)\in\Omega,
\end{align}
where $\Omega$ is an open convex cone in $\R^m$ consisting of vectors with strictly decreasing positive coordinates.
We are going to demonstrate that, like in the one-dimensional case, the function $E(\bar\xi)$ is coercive (but is not convex anymore).
This implies the existence of a point of its global minimum. Coordinates of this point determine a solution (\ref{solm}), (\ref{soli}) of our problem.

We first prove that the function $E(\bar\xi)$ is bounded from below.

\begin{lemma}\label{lem2}
There exists a constant $E_0$ such that $E(\bar\xi)\ge E_0$ for all $\bar\xi\in\Omega$.
\end{lemma}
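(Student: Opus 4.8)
The plan is to reduce the lower bound for the multidimensional potential $E(\bar\xi)$ to the coercivity-type estimates already available for logarithmic terms of the form $-\ln(G(x)-G(y))$, handling separately the one troublesome term $k_mAG(\xi_m/a_m)$, which is bounded below by $0$ but does not by itself control $\xi_m$ near the singularity at $0$. First I would observe that since $d_i\ge 0$, the quadratic terms $\frac14\sum d_i\xi_i^2$ are nonnegative and may be discarded, as is the nonnegative term $k_mAG(\xi_m/a_m)>0$. Thus it suffices to bound from below the sum $\tilde E(\bar\xi)=-\sum_{i=0}^{m-1}k_i(u_{i+1}-u_i)\ln(G(\xi_{i+1}/a_i)-G(\xi_i/a_i))$.

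Next I would note that, because $G$ is strictly decreasing on $(0,+\infty)$ with $G(+\infty)=0$, for every $i=0,\dots,m-1$ and every $\bar\xi\in\Omega$ we have $0<G(\xi_{i+1}/a_i)-G(\xi_i/a_i)<G(\xi_{i+1}/a_i)$. For $i=0$ the convention $G(\xi_0)=G(+\infty)=0$ gives $G(\xi_1/a_0)-G(\xi_0/a_0)=G(\xi_1/a_0)$, a quantity that can be arbitrarily large when $\xi_1\to 0+$ — but since $\xi_m<\cdots<\xi_1$ all coordinates stay positive and large values of $G$ only make $-\ln$ more negative, so I cannot simply drop these. The key elementary fact I would establish is that the one-variable function $x\mapsto -\ln G(x)$ and, more generally, the two-variable function $-\ln(G(x)-G(y))$ on $\{x>y>0\}$, is bounded below by an affine (indeed linear with a universal slope) function of $x$: since $G(x)\le G(x)-G(y)$ fails in the needed direction, instead I use that $G(x)-G(y)\le G(y)$-type bounds are the wrong way and rather rely on $G(x)-G(y)\le \int_0^{+\infty}s^{1-n}e^{-s^2/4}\,ds$ when $n\ge 3$, a finite constant, giving $-\ln(G(x)-G(y))\ge -\ln C_n$. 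For $n=2$, $G$ is unbounded but $G(x)-G(y)\le G(0+)$ is not finite, so instead I use the asymptotics $G(y)\sim -\ln y$ and the fact that $G(x)-G(y)\ge $ a fixed positive constant whenever $x$ is bounded and $y-x$ is bounded away from $0$, combined with direct domination $-\ln(G(x)-G(y))\ge -\ln G(y) \ge $ an affine lower bound in $y$ near the bad regime. Assembling these gives $\tilde E(\bar\xi)\ge E_0$ for a suitable constant, hence $E(\bar\xi)\ge E_0$.

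The cleanest route, which I would ultimately follow, is: for each $i$, split into the case $G(\xi_{i+1}/a_i)-G(\xi_i/a_i)\ge 1$, where $-\ln(\cdot)\ge 0$, and the case where it is $<1$, where the term is positive anyway — so in fact each summand with $i\ge 1$ is bounded below by a constant only if $G(\xi_{i+1}/a_i)-G(\xi_i/a_i)$ is bounded below, which it need not be. Therefore the honest argument must be: $-\ln(G(\xi_{i+1}/a_i)-G(\xi_i/a_i))\ge -\ln G(\xi_{i+1}/a_i)$, and then use $-\ln G(y)\ge -\ln\E_n(y) + O(1) = (n-2)\ln y + O(1)$ for $n\ge 3$ (resp. $-\ln(-\ln y)+O(1)$ for $n=2$) as $y\to 0+$, while for $y$ bounded away from $0$ the quantity $-\ln G(y)$ is simply bounded below. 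The only summand for which this lower bound could go to $-\infty$ is the one where $\xi_{i+1}\to+\infty$, i.e. nominally $i$ with $\xi_{i+1}$ large — but $-\ln G(y)\to+\infty$ as $y\to+\infty$, so that direction is harmless; the genuinely delicate direction is $\xi_{i+1}\to 0+$, where $-\ln G(\xi_{i+1}/a_i)\to -\infty$. This happens only for the smallest coordinate $\xi_m$, and here I invoke the retained term $k_mAG(\xi_m/a_m)$: as $\xi_m\to 0+$, $G(\xi_m/a_m)\to+\infty$ like $\E_n(\xi_m/a_m)$, which dominates the logarithmic blow-up $-\ln G(\xi_{m-1}/a_m)$ of the single summand $i=m-1$ that sees $\xi_m$. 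So I would prove the two-term inequality $k_mAG(\xi_m/a_m)-k_{m-1}(u_m-u_{m-1})\ln(G(\xi_{m-1}/a_{m-1})-G(\xi_m/a_{m-1}))\ge \const$ directly, using that the exponential/power growth of $G$ near $0$ beats any logarithm, and bound all remaining summands below by constants. The main obstacle is precisely this interaction near $\xi_m\to 0+$: one must see that the heat-source term $k_mAG(\xi_m/a_m)$ is exactly the mechanism preventing the innermost free boundary from collapsing to the origin, and quantify that the positive power (or $-\ln$) growth of $G$ outweighs the logarithmic singularity of the adjacent phase term.
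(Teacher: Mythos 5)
Your overall strategy coincides with the paper's: use the monotonicity of $G$ to bound each logarithmic summand from below and let the retained heat-source term $k_mAG(\xi_m/a_m)$, which blows up like $\E_n(\xi_m/a_m)$ as $\xi_m\to 0+$, absorb the logarithmic singularities. However, the final plan as written contains a step that fails. You assert that the dangerous collapse $\xi_{i+1}\to 0+$ ``happens only for the smallest coordinate $\xi_m$,'' and that it therefore suffices to prove a two-term inequality pairing $k_mAG(\xi_m/a_m)$ with the single summand $i=m-1$ while bounding ``all remaining summands below by constants.'' This is not true: if $\xi_1\to 0+$ then all coordinates $\xi_m<\cdots<\xi_1$ collapse to $0$ simultaneously, and then \emph{every} summand, bounded below via your own inequality by $-k_i(u_{i+1}-u_i)\ln G(\xi_{i+1}/a_i)$, tends to $-\infty$, not just the innermost one; those summands cannot be bounded below by constants uniformly on $\Omega$. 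The repair is exactly the paper's device: since $\xi_{i+1}\ge\xi_m$ and $G$ decreases, $G(\xi_{i+1}/a_i)\le G(\xi_m/a_i)$, so all $m$ logarithmic summands are simultaneously bounded below in terms of the single variable $\xi_m$, giving $E(\bar\xi)\ge f(\xi_m)$ with $f(y)=-\sum_{i=0}^{m-1}k_i(u_{i+1}-u_i)\ln G(y/a_i)+k_mAG(y/a_m)$; this one-variable continuous function tends to $+\infty$ at both ends of $(0,+\infty)$ because the power-type (or $-\ln$) growth of $G(y/a_m)$ dominates the finite sum of terms $\ln G(y/a_i)\sim\ln\E_n(y/a_i)$ as $y\to 0+$, hence $f$ attains a finite minimum $E_0$.

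Two further slips are worth correcting even though you ultimately abandon the routes that contain them: the integral $\int_0^{+\infty}s^{1-n}e^{-s^2/4}\,ds$ is \emph{not} finite for $n\ge 3$ (nor for $n=2$) --- it diverges at $s=0$, which is precisely why $G(0+)=+\infty$ --- so the bound $-\ln\bigl(G(x)-G(y)\bigr)\ge-\ln C_n$ has no content; and the sign claim ``$G(\xi_{i+1}/a_i)-G(\xi_i/a_i)\ge 1$, where $-\ln(\cdot)\ge 0$'' is backwards ($-\ln t\le 0$ for $t\ge 1$; the dangerous regime for a lower bound is a \emph{large} difference, which is exactly what the monotonicity bound $G(\xi_{i+1}/a_i)-G(\xi_i/a_i)\le G(\xi_m/a_i)$ rules out once $\xi_m$ is controlled).
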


\begin{proof}
Since the function $G(y)$ decreases,
\[G(\xi_{i+1}/a_i)-G(\xi_i/a_i)\le G(\xi_{i+1}/a_i)\le G(\xi_m/a_i), \quad i=0,\ldots,m-1.\]
Taking also into account that $k_i(u_{i+1}-u_i)>0$, $d_i\ge 0$, we obtain that $E(\bar\xi)\ge f(\xi_m)$, where the  function
\begin{equation}\label{f}
f(y)=-\sum_{i=0}^{m-1}k_i(u_{i+1}-u_i)\ln G(y/a_i)+k_mAG(y/a_m)
\end{equation}
is continuous on $(0,+\infty)$. As follows from (\ref{lim0}), the term $k_mAG(y/a_m)\to+\infty$ as $y\to 0+$ faster than
all the terms $\ln G(y/a_i)$. Therefore, $\displaystyle\lim_{y\to 0+} f(y)=+\infty$. It is also clear that $\displaystyle\lim_{y\to +\infty} f(y)=+\infty$. By these limit relations we see that $f(y)$ has a global minimum. Taking $E_0=\min\limits_{y>0} f(y)$, we complete the proof.
\end{proof}

Now we prove the coercivity of the function $E(\bar\xi)$.

\begin{theorem}\label{th4}
For each $c\in\R$ the set $\Omega_c=\{ \ \bar\xi\in\Omega \ | \ E(\bar\xi)\le c \ \}$ is compact.
\end{theorem}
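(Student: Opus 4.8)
The plan is to follow the scheme of the proof of Theorem~\ref{th1}: I will show that $\Omega_c$ is contained in a compact subset $K$ of the open cone $\Omega$ and then conclude using continuity of $E$. The genuinely new feature, absent in the one-dimensional case, is that $G$ is \emph{unbounded} near the origin (whereas $F$ took values in $(0,1)$), so the logarithmic summands of $E$ are no longer sign-definite; consequently I must first keep the smallest coordinate $\xi_m$ away from the degenerate value $0$ before anything else can be controlled.

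\emph{Step 1: trapping $\xi_m$.} Exactly as in the proof of Lemma~\ref{lem2}, for $\bar\xi\in\Omega_c$ one has $E(\bar\xi)\ge f(\xi_m)$ with $f$ as in (\ref{f}). Since $f(y)\to+\infty$ both as $y\to0+$ (by (\ref{lim0}) the point-source term $k_mAG(y/a_m)$ outgrows every logarithm) and as $y\to+\infty$, the set $\{\,y>0:f(y)\le c\,\}$ is contained in some interval $[\rho_1,\rho_2]$ with $0<\rho_1\le\rho_2<+\infty$ (if it is empty then $\Omega_c=\emptyset$ and the claim is trivial). Thus $0<\rho_1\le\xi_m\le\xi_i$ for every $i$ and every $\bar\xi\in\Omega_c$. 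Monotonicity of $G$ then gives $0<G(\xi_{i+1}/a_i)-G(\xi_i/a_i)\le G(\xi_{i+1}/a_i)\le G(\rho_1/a_i)$ for $i=1,\ldots,m-1$, so each such logarithmic summand is bounded below; together with $k_mAG(\xi_m/a_m)\ge0$ and $\frac14\sum d_i\xi_i^2\ge0$, the whole of $E$ apart from the $i=0$ term is $\ge C_0$ for a constant $C_0$ depending only on $c$ and the data.

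\emph{Step 2: bounding $\xi_1$.} Isolating the $i=0$ summand $E_0(\bar\xi)=-k_0(u_1-u_0)\ln G(\xi_1/a_0)$ (recall $G(\xi_0)=G(+\infty)=0$), Step~1 gives $E_0(\bar\xi)\le c-C_0$, i.e. $G(\xi_1/a_0)\ge\exp\bigl(-(c-C_0)/(k_0(u_1-u_0))\bigr)>0$; since $G$ is a strictly decreasing bijection of $(0,+\infty)$ onto $(0,+\infty)$, this yields $\xi_1\le R$ for an explicit finite $R$. Hence $\Omega_c\subset[\rho_1,R]^m$ is bounded. On this box every logarithmic summand of $E$ has its argument bounded above by a finite constant, hence is bounded below; isolating the $i$-th one and using $E(\bar\xi)\le c$ gives $G(\xi_{i+1}/a_i)-G(\xi_i/a_i)\ge\delta_0>0$ for $i=1,\ldots,m-1$, with $\delta_0$ depending only on $c$ and the data. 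Since $G$ is Lipschitz on $[\rho_1/a_i,R/a_i]$ — indeed $|G'(y)|=y^{1-n}e^{-y^2/4}\le(\rho_1/a_i)^{1-n}$ there — this forces $\xi_i-\xi_{i+1}\ge\delta_1>0$ for all $i=1,\ldots,m-1$.

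Therefore $\Omega_c\subset K:=\{\,\bar\xi\in\R^m:\rho_1\le\xi_i\le R,\ \xi_i-\xi_{i+1}\ge\delta_1\ \forall i\,\}$, which is a compact subset of the open cone $\Omega$; as $E$ is continuous on $\Omega$, the set $\Omega_c$ is a closed subset of $K$ and hence compact. (Consequently $E$ attains a global minimum on $\Omega$, whose coordinates solve (\ref{sys1})–(\ref{sysm}) and produce the desired solution (\ref{solm}), (\ref{soli}).) The main obstacle is Step~1: the logarithmic terms here are not sign-definite and cannot be bounded below until $\xi_m$ is separated from $0$, and it is precisely the auxiliary one-variable comparison function $f$ of Lemma~\ref{lem2}, whose blow-up at the origin comes from the positive heat source, that makes this separation possible.
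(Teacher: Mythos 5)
Your proof is correct and follows essentially the same route as the paper: trap $\xi_m$ away from $0$ using the comparison function $f$ of Lemma~\ref{lem2}, bound $\xi_1$ from above via the $i=0$ logarithmic term, obtain the uniform gaps $\xi_i-\xi_{i+1}\ge\delta_1$ from the Lipschitz bound $|G'(y)|\le y^{1-n}$ away from the origin, and conclude that $\Omega_c$ is a closed subset of a compact $K\subset\Omega$. The only cosmetic difference is that you also record the (unneeded but harmless) upper bound $\xi_m\le\rho_2$.
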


\begin{proof}
If $\bar\xi=(\xi_1,\ldots,\xi_m)\in\Omega_c$ then $f(\xi_m)\le E(\bar\xi)\le c$, where $f(y)$ is the function (\ref{f}).
Since $f(y)\to +\infty$ as $y\to 0+$ there exists such $r_1>0$ that $f(y)>c$ for all $y\in (0,r_1)$. Then $\xi_m$ cannot be less than $r_1$:
\begin{equation}\label{b1}
\xi_m\ge r_1.
\end{equation}
In particular, this implies that
\[
k_i(u_{i+1}-u_i)\ln (G(\xi_{i+1}/a_i)-G(\xi_i/a_i))\le M=\max_{i=0,\ldots,m-1} k_i(u_{i+1}-u_i)\ln G(r_1/a_i).
\]
It follows from this bound that for each $i=0,\ldots,m-1$
\begin{align}\label{6}
-k_i(u_{i+1}-u_i)\ln (G(\xi_{i+1}/a_i)-G(\xi_i/a_i))\le\nonumber \\ E(\bar\xi)+(m-1)M\le c_1\doteq c+(m-1)M.
\end{align}
Taking $i=0$, we find
$
-k_0(u_1-u_0)\ln G(\xi_1/a_0)\le c_1.
$
This implies that $G(\xi_1/a_0)\ge e^{-c_1/(k_0(u_1-u_0))}>0$ and
\begin{equation}\label{b2}
\xi_1\le r_2\doteq a_0G^{-1}(e^{-c_1/(k_0(u_1-u_0))}).
\end{equation}
From relations (\ref{6}) with $i=1,\ldots,m-1$ it follows that
\begin{equation}\label{8}
G(\xi_{i+1}/a_i)-G(\xi_i/a_i)\ge q\doteq \min_{i=1,\ldots,m-1} e^{-c_1/(k_i(u_{i+1}-u_i))}>0
\end{equation}
Since $|G'(y)|\le y^{1-n}$, the function $G(y)$ is Lipschitz with constant $L=\alpha^{1-n}$ on the interval $[\alpha,+\infty)$. Choosing such $\alpha>0$ that $\alpha\max\limits_{i=1,\ldots,m-1} a_i\le r_1$, so that
$\xi_i/a_i>\xi_{i+1}/a_i\ge r_1/a_i\ge \alpha$, we derive from (\ref{8}) the inequalities
\[
L(\xi_i-\xi_{i+1})/a_i\ge G(\xi_{i+1}/a_i)-G(\xi_i/a_i)\ge q,
\]
which implies the bounds
\begin{equation}\label{b3}
\xi_i-\xi_{i+1}\ge\delta=\frac{q}{L}\min_{i=1,\ldots,m-1} a_i>0.
\end{equation}
In view of (\ref{b1}), (\ref{b2}), (\ref{b3}) we find that $\bar\xi\in K$, where
\[
K=\{\ \bar\xi=(\xi_1,\ldots,\xi_m)\in\R^m \ | \ r_2\ge\xi_1\ge\cdots\ge\xi_m\ge r_1>0, \ \xi_i-\xi_{i+1}\ge\delta, \ i=1,\ldots, m-1 \ \}
\]
is a compact subset of $\Omega$. By the continuity of $E(\bar\xi)$, we conclude that the set $\Omega_c$ is a closed subset of the compact $K$ and, therefore, is compact.
\end{proof}

As we have already demonstrated in the one-dimensional case, the coercivity of $E(\bar\xi)$ implies that there exists a global minimum $E(\bar\xi_0)=\min_\Omega E(\bar\xi)$. Coordinates of the point $\bar\xi_0$ provide the unknown parameters $\xi_i$,
$i=1,\ldots,m$ of the free boundaries in solution (\ref{solm}), (\ref{soli}). We establish the existence of solution.

\begin{theorem}\label{th5}
There exists a self-similar solution (\ref{solm}), (\ref{soli}) of the Stefan problem (\ref{m1}), (\ref{mSt}), (\ref{m2}), (\ref{gr}).
\end{theorem}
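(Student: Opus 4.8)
The plan is to mimic the one-dimensional argument of Theorem~\ref{th2}, now using the coercivity result of Theorem~\ref{th4} in place of Theorem~\ref{th1}. First I would observe that $E(\bar\xi)\in C^\infty(\Omega)$: on the open cone $\Omega$ the arguments $\xi_i/a_i$ are strictly positive and strictly ordered, so each $G(\xi_{i+1}/a_i)-G(\xi_i/a_i)$ is a strictly positive smooth function of $\bar\xi$ (recall $G$ is smooth on $(0,+\infty)$), hence the logarithms and the quadratic terms are smooth. Therefore $E$ is a smooth function on the open set $\Omega$.

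Next I would invoke Theorem~\ref{th4}: for $c>E_0=\min_\Omega E$ (which exists and is finite by Lemma~\ref{lem2} together with the compactness just proved, or simply by picking any $c$ large enough that $\Omega_c\neq\emptyset$) the sublevel set $\Omega_c$ is a nonempty compact subset of $\Omega$. A continuous function on a nonempty compact set attains its minimum, so there is a point $\bar\xi_0\in\Omega_c\subset\Omega$ with $E(\bar\xi_0)=\min_{\Omega_c}E$; since outside $\Omega_c$ one has $E>c\ge E(\bar\xi_0)$, this is in fact a global minimum over all of $\Omega$. The key point is that $\bar\xi_0$ lies in the \emph{open} set $\Omega$, not on its boundary, precisely because $\Omega_c$ is contained in the compact $K$ that sits strictly inside $\Omega$ (the bounds $\xi_m\ge r_1>0$ and $\xi_i-\xi_{i+1}\ge\delta>0$ keep it away from $\partial\Omega$).

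Then, since $\bar\xi_0$ is an interior minimum of a differentiable function, Fermat's theorem gives $\nabla E(\bar\xi_0)=0$. It remains only to note, as was checked when $E$ was introduced, that the equation $\nabla E(\bar\xi)=0$ is exactly the algebraic system (\ref{sys1}), (\ref{sysm}) for the free-boundary parameters: differentiating (\ref{E}) in $\xi_i$ reproduces the $i$-th equation of that system (the term $k_mAG(\xi_m/a_m)$ producing the $k_mA G'(\xi_m/a_m)/a_m$ contribution in (\ref{sysm})). Hence the coordinates $\xi_1>\cdots>\xi_m>0$ of $\bar\xi_0$ satisfy the Stefan conditions, and plugging them into (\ref{solm}), (\ref{soli}) yields a self-similar solution; the initial condition (\ref{m2}) and the growth condition (\ref{gr}) hold automatically by construction of the profile $v$, as already discussed. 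This completes the proof.

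I do not expect a genuine obstacle here, since the heavy lifting — the lower bound (Lemma~\ref{lem2}) and the compactness of sublevel sets (Theorem~\ref{th4}) — is already done. The only point requiring a little care is to make sure the minimizer is genuinely interior to $\Omega$ rather than on the boundary, so that $\nabla E$ vanishes there; this is exactly what the explicit compact $K\Subset\Omega$ in the proof of Theorem~\ref{th4} guarantees, and it is worth stating explicitly. Uniqueness is \emph{not} claimed in this theorem (the potential is not convex in the multidimensional case), so no further argument is needed; the remark that uniqueness nevertheless follows from the general Stefan theory is deferred.
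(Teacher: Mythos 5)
Your argument is correct and is essentially the paper's own: the paper likewise deduces Theorem~\ref{th5} directly from the coercivity of Theorem~\ref{th4}, taking a global minimizer $\bar\xi_0\in\Omega$ and noting that $\nabla E(\bar\xi_0)=0$ reproduces the system (\ref{sys1}), (\ref{sysm}), whose solution yields the free boundaries in (\ref{solm}), (\ref{soli}). Your explicit remark that the minimizer is interior to $\Omega$ (because $\Omega_c\subset K\Subset\Omega$) is a detail the paper leaves implicit, and it is worth stating.
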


Observe that in the general case the function $E(\bar\xi)$ is not convex.
In fact, assume that $m=1$, so that
\[
E(\bar\xi)=E(\xi_1)=-k_0(u_1-u_0)\ln G(\xi_1/a_0)+Ak_1G(\xi_1/a_1)+d_1\xi_1^2/4.
\]
Since the function
\[
f(\xi_1)=-k_0(u_1-u_0)\ln G(\xi_1/a_0)+d_1\xi_1^2/4\to -\infty \ \mbox{ as } \xi_1\to 0+,
\]
it cannot be convex on $(0,+\infty)$. This implies that $E(\xi_1)=f(\xi_1)+Ak_1G(\xi_1/a_1)$ is not convex either for all sufficiently small $Ak_1>0$.

Nevertheless, a solution (\ref{solm}), (\ref{soli}) of Stefan problem (\ref{m1}), (\ref{mSt}), (\ref{m2}), (\ref{gr}) is unique. This can be proved like in \cite[Chapter V, \S~9]{LSU}. For the sake of completeness, we provide the details. First we notice that $u=v(|x|/\sqrt{t})$ is a solution of (\ref{m1}), (\ref{mSt}), (\ref{m2}), (\ref{gr}) if and only if it is a weak solution to the problem
\begin{equation}\label{par}
\beta(u)_t-\Delta_x \alpha(u)=\omega_nk_ma_m^{n-2}t^{\frac{n}{2}-1}\delta(x), \quad u(0,x)\equiv u_0,
\end{equation}
where $\alpha(u)$, $\beta(u)$ are strictly increasing functions on $(u_0,+\infty)$ linear on each interval $(u_i,u_{i+1})$,
$i=0,\ldots,m$, with slopes $\alpha'(u)=k_i$, $\beta'(u)=k_i/a_i^2$, and such that
\[
\alpha(u_i+)-\alpha(u_i-)=0, \quad \beta(u_i+)-\beta(u_i-)=d_i, \ i=1,\ldots,m.
\]
Remark that for all $u,v>u_0$, $u\not=v$
\begin{equation}\label{m3}
0<\frac{\alpha(u)-\alpha(v)}{\beta(u)-\beta(v)}\le \max a_i^2.
\end{equation}
Assume that $u_i=v_i(|x|/\sqrt{t})$, $i=1,2$, are two solutions. Then it follows from (\ref{par}) that
\begin{equation}\label{m4}
(\beta(u_1)-\beta(u_2))_t-\Delta_x(\alpha(u_1)-\alpha(u_2))=0 \ \mbox{ in } \D'(\Pi)
\end{equation}
As follows from expressions (\ref{solm}), (\ref{soli}), $P(t,x)\doteq \beta(v_1(\xi))-\beta(v_2(\xi))=\const $ for small $\xi=|x|/\sqrt{t}>0$ and $|P(t,x)|\le\const\cdot G(\xi/a_0)$ for large $\xi$. By the L'H\^{o}pital's rule
\begin{align*}
\lim_{y\to+\infty}-\frac{2y^{-1}G'(y)}{G(y)}=2\lim_{y\to+\infty}\frac{y^{-2}G'(y)-y^{-1}G''(y)}{G'(y)}=\\
2\lim_{y\to+\infty} [y^{-2}+y^{-1}((n-1)y^{-1}+y/2)]=1,
\end{align*}
and, therefore, $G(y)\sim -2y^{-1}G'(y)=2y^{-n}e^{-y^2/4}$ as $y\to+\infty$. This implies that for large $\xi=|x|/\sqrt{t}$ \[|P(t,x)|\le\const\cdot\xi^{-n}e^{-\xi^2/(4a_0^2)}.\]
We conclude that the function $P$ is bounded and $P(t,\cdot)\in L^2(\R^n)$, $\|P(t,\cdot)\|_2\le\const\cdot t^{n/4}$.
The similar statements hold for the function $Q=\alpha(u_1)-\alpha(u_2)$ because $Q=CP$, where $C=C(t,x)=(\alpha(u_1)-\alpha(u_2))/(\beta(u_1)-\beta(u_2))$ (if $u_1=u_2$ we set $C=0$) is a nonnegative bounded function, in view of (\ref{m3}).

Hence, we can apply (\ref{m4}) to a test function $f=f(t,x)$ from the Sobolev space $W_2^{1,2}(\Pi_T)$, $\Pi_T=(0,T)\times\R^n$ (so that $f,f_t,\nabla_x f,D_x^2 f\in L^2(\Pi_T)$) such that $f(T,x)=0$. As a result, we obtain the relation
\begin{equation}\label{m5}
\int_{\Pi_T} P(t,x)[f_t+C\Delta_x f]dtdx=0.
\end{equation}
Let $F(t,x)\in C_0^1(\Pi_T)$, $\varepsilon>0$, and $f^\varepsilon=f^\varepsilon(t,x)\in W_2^{1,2}(\Pi_T)$ be a solution of
the backward Cauchy problem
\begin{equation}\label{m6}
f_t+(C+\varepsilon)\Delta_x f=F, \quad f(T,x)=0.
\end{equation}
As is demonstrated in \cite{LSU}, such a solution exists and satisfies the estimate
\begin{equation}\label{m7}
|\Delta_x f^\varepsilon|_2\le C_0/\sqrt{\varepsilon},
\end{equation}
where $C_0$ is a constant independent of $\varepsilon$. It follows from (\ref{m5}) with $f=f^\varepsilon$ that
\begin{equation}\label{m8}
\int_{\Pi_T} P(t,x)F(t,x)dtdx=\varepsilon\int_{\Pi_T} P(t,x)\Delta_xf^\varepsilon(t,x)dtdx.
\end{equation}
By the Cauchy–Bunyakovsky inequality and (\ref{m7})
\[\left|\int_{\Pi_T} P(t,x)\Delta_xf^\varepsilon(t,x)dtdx\right|\le\|P\|_2\|\Delta_xf^\varepsilon\|_2\le C_0\|P\|_2/\sqrt{\varepsilon},\]
and the right-hand side of (\ref{m8}) vanishes as $\varepsilon\to 0$. We conclude that
\[\int_{\Pi_T} P(t,x)F(t,x)dtdx=0\]
for all $F(t,x)\in C_0^1(\Pi_T)$ and all $T>0$. This means that $P=0$ a.e. on $\Pi$, that is, $u_1=u_2$.

Thus, we have established the uniqueness.

\begin{theorem}\label{th6}
A self-similar solution (\ref{solm}), (\ref{soli}) of the Stefan problem (\ref{m1}), (\ref{mSt}), (\ref{m2}), (\ref{gr}) is unique. In particular, the function $E(\bar\xi)$ has only one critical point, the point of its global minimum.
\end{theorem}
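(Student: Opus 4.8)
The plan is to reduce uniqueness of the self-similar solution to a uniqueness statement for a linear parabolic problem by a duality argument, and then to read off the consequence for the critical points of $E$.

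\emph{Reformulation.} First I would observe that $u=v(|x|/\sqrt t)$ given by (\ref{solm}), (\ref{soli}) solves the Stefan problem (\ref{m1}), (\ref{mSt}), (\ref{m2}), (\ref{gr}) if and only if it is a weak solution of the scalar degenerate parabolic equation (\ref{par}), where $\alpha,\beta$ are the piecewise-linear strictly increasing functions with slopes $k_i$ and $k_i/a_i^2$ on $(u_i,u_{i+1})$ and with $\beta$ having jump $d_i$ (and $\alpha$ no jump) at $u_i$; the Stefan conditions (\ref{mSt}) are exactly the matching conditions for $\alpha(u)$ and $\beta(u)$ across the free boundaries, so this equivalence is merely an unwinding of the definitions. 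The structural fact to keep is the two-sided bound (\ref{m3}) on the difference quotient $(\alpha(u)-\alpha(v))/(\beta(u)-\beta(v))\in(0,\max a_i^2]$.

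\emph{Difference and decay.} For two solutions $u_j=v_j(|x|/\sqrt t)$, $j=1,2$, I would set $P=\beta(u_1)-\beta(u_2)$ and $Q=\alpha(u_1)-\alpha(u_2)=CP$ with $0\le C(t,x)\le\max a_i^2$; subtracting the two copies of (\ref{par}) the $\delta$-source cancels and $P$ satisfies the linear divergence-form equation $P_t-\Delta_x(CP)=0$ in $\D'(\Pi)$, which is (\ref{m4}). From the explicit formulas one sees that $P$ is constant for small $\xi=|x|/\sqrt t$, so nothing bad happens at the origin, and, using the elementary asymptotics $G(y)\sim 2y^{-n}e^{-y^2/4}$ as $y\to+\infty$, that $P$ and $Q$ are bounded with Gaussian decay in $\xi$; in particular $P(t,\cdot)\in L^2(\R^n)$ with $\|P(t,\cdot)\|_2\le\const\cdot t^{n/4}$. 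This justifies testing the equation for $P$ against any $f\in W_2^{1,2}(\Pi_T)$ with $f(T,\cdot)=0$, giving $\int_{\Pi_T}P(f_t+C\Delta_x f)\,dt\,dx=0$.

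\emph{Duality and conclusion.} Given $F\in C_0^1(\Pi_T)$ and $\varepsilon>0$, let $f^\varepsilon\in W_2^{1,2}(\Pi_T)$ solve the regularised backward Cauchy problem (\ref{m6}); \cite{LSU} supplies its solvability together with the estimate (\ref{m7}), $\|\Delta_x f^\varepsilon\|_2\le C_0/\sqrt\varepsilon$. Plugging $f=f^\varepsilon$ into the identity above yields $\int_{\Pi_T}PF\,dt\,dx=\varepsilon\int_{\Pi_T}P\,\Delta_x f^\varepsilon\,dt\,dx$, whose right-hand side is $\le C_0\|P\|_2\sqrt\varepsilon\to0$ by the Cauchy--Bunyakovsky inequality; hence $P\equiv 0$, i.e. $v_1=v_2$, and the self-similar solution is unique. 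I expect the import of the solvability of (\ref{m6}) and of the sharp $\varepsilon^{-1/2}$ bound (\ref{m7}) from \cite{LSU} to be the only genuinely technical point; the remainder is bookkeeping together with the cancellation of the singular source. Finally, for the last assertion: by construction $\nabla E(\bar\xi)=0$ is equivalent to system (\ref{sys1})--(\ref{sysm}), which in turn holds iff the function (\ref{solm}), (\ref{soli}) built from $\bar\xi$ solves the Stefan problem; since distinct $\bar\xi\in\Omega$ give distinct free boundaries $|x|=\xi_i\sqrt t$ and hence distinct solutions, uniqueness of the solution forces $E$ to have exactly one critical point, which by Theorem~\ref{th4} is necessarily its global minimiser.
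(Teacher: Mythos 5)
Your proposal is correct and follows essentially the same route as the paper: the reformulation via (\ref{par}), the $L^2$ decay of $P=\beta(u_1)-\beta(u_2)$, and the duality argument with the regularised backward problem (\ref{m6}) and the estimate (\ref{m7}) from \cite{LSU} are exactly the paper's proof. Your closing remark deducing the uniqueness of the critical point of $E$ from the injectivity of $\bar\xi\mapsto$ solution is a sensible explicit justification of what the paper leaves implicit.
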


\begin{remark}\label{rem3}
In the case when $u_0$ coincides with temperature of phase transition and the corresponding latent specific heat $d_0>0$ there exists another self-similar solution $u_1=v_1(\xi)$ of problem (\ref{m1}), (\ref{mSt}), (\ref{m2}), (\ref{gr}), which equals the constant $u_0$ in the domain $\xi_0<\xi<+\infty$ and has the form (\ref{solm}), (\ref{soli}) for $\xi<\xi_0$ (where $\xi_0$ is now finite). This solution contains the extra phase transition surface $\xi=\xi_0$. It can be easily verified that this new solution corresponds to the point of global minimum of the function
\begin{align}\label{E1}
E_1(\bar\xi)=-\sum_{i=0}^{m-1}k_i(u_{i+1}-u_i)\ln (G(\xi_{i+1}/a_i)-G(\xi_i/a_i))\nonumber\\
+k_mAG(\xi_m/a_m)+\frac{1}{4}\sum_{i=0}^m d_i\xi_i^2, \quad \bar\xi=(\xi_0,\ldots,\xi_m)\in\Omega,
\end{align}
where the cone $\Omega\subset\R^{m+1}$ consists of points with strictly decreasing positive coordinates. We notice that the only difference between formulas (\ref{E1}) and (\ref{E}) is the extra term $d_0\xi_0^2/4$. This term guarantees coercivity of $E_1$ and the existence of solution $u_1$. Moreover, the uniqueness of this solution (of the prescribed above form) can be established as in the proof of Theorem~\ref{th6}.

Observe that both the functions $u,u_1$ are solutions of (\ref{par}), where the increasing functions $\alpha(u),\beta(u)$ are now defined up to $u_0$ and satisfy the conditions $\alpha(u_0)=\alpha(u_0+)$, $\beta(u_0)=\beta(u_0+)-d_0$. This seems surprising because of the uniqueness statement of Theorem~\ref{th6}. But there is no contradiction here. In fact, as is easy to verify,
\[P=\beta(v(\xi))-\beta(v_1(\xi))\mathop{\to}_{\xi\to+\infty} d_0>0,
\]
therefore, $P(t,\cdot)\notin L^2(\R^n)$, and the reasoning used in the proof of Theorem~\ref{th6} is not applicable.
\end{remark}

\end{document}